\theoremstyle{plain}
\newtheorem{teo}{Theorem}[section]
\newtheorem{cor}[teo]{Corollary}
\newtheorem{lem}[teo]{Lemma}
\newtheorem{prop}[teo]{Proposition}
\theoremstyle{definition}
\newtheorem{exa}[teo]{Example}
\newtheorem{obs}[teo]{Remark}
\numberwithin{equation}{section}
\newcommand{\bbZ}{\mathbb{Z}}
\newcommand{\bbN}{\mathbb{N}}
\newcommand{\bbP}{\mathbb{P}}
\newcommand{\cA}{\mathbb{A}}
\newcommand{\cM}{\mathbb{M}}
\newcommand{\eps}{\varepsilon}
\newcommand{\nseta}{\nrightarrow}
\newcommand{\surv}{\textit{survives w.p.p.}}
\newcommand{\dies}{\textit{dies out a.s.}}
\newcommand{\qn}{(q_n)_{n=1}^{\infty}}
\newcommand{\qnk}{(q_{n_k})_{k=1}^{\infty}}
\newcommand{\process}{\Gamma[N,L,(q_n)_{n=1}^{\infty}]}
\newcommand{\an}{a_n(N,L,(q_n)_{n=1}^{\infty})}
\newcommand{\arn}{a_{r_n}(N,L,(q_n)_{n=1}^{\infty})}
\newcommand{\arnImpar}{a_{r_{2n-1}}(N,L,(q_n)_{n=1}^{\infty})}
\begin{document}

\baselineskip=18pt

\title{Random walks systems with finite lifetime on $ \bbZ $}

\author[1]{Elcio~Lebensztayn}
\author[2]{F\'abio~Prates~Machado}
\author[3]{Mauricio~Zuluaga~Martinez}

%\address[1]{Department of Statistics, UNICAMP}
%\address[2]{Department of Statistics, USP}
%\address[3]{Department of Mathematics, UFERSA}

\address[Elcio~Lebensztayn]
{Institute of Mathematics, Statistics and Computer Sciences - UNICAMP - Brazil}
\address[F\'abio~Prates~Machado]
{Institute of Mathematics and Statistics - USP - Brazil}
\address[Mauricio~Zuluaga~Martinez]
{UFERSA - Brazil}

\email{lebensztayn@ime.unicamp.br, fmachado@ime.usp.br}
\email{zuluagamm@gmail.com}

\thanks{Research supported by CNPq (303872/2012-8 and 310829/2014-3) and FAPESP(09/52379-8 and 12/22673-4)}

\keywords{Epidemic Model; Frog Model; Global and Local Survival; Interacting Particle Systems; Random Walks}

\subjclass[2010]{60K35, 60G50}
\date{\today}

\begin{abstract}
We consider a non-homogeneous random walks system on $\bbZ$ in which each active
particle performs a nearest neighbor random walk and activates
all inactive particles it encounters up to a total amount of $L$ jumps. We present 
necessary and sufficient conditions for the process to survive, which means that an 
infinite number of random walks become activated.
\end{abstract}

\maketitle

\section{Introduction}
\label{S: Introduction}

We study conditions for extinction or survival for a non-homogeneous 
random walks system on $\bbZ$ such that at time zero, starts from $N$ 
particles at each vertex of $\bbN = \{ 1, 2, \dots \}$. All
particles are inactive, except for those placed at the vertex~1.
The active particles move as discrete-time independent non-homogeneous 
nearest neighbor random walks on~$\bbZ$, activating the inactive particles
they encounter along their way up to their $L-th$ step. 
We suppose that the jump probabilities of the active particles depend 
on their initial position: particles initially placed at position~$n$, 
if activated, jump $L$ random steps being each of them to the right with 
probability $1 - q_n$ or to the left with probability $q_n$. We refer to
the process as $\process$.

%An analogous model was first introduced in Lebensztayn et al.~\cite{RWSZ}. 
The model can be thought of as for the evolution of a disease, by contact, 
in a population of connected individuals living in small colonies. Each infected
individual helps spreading the disease up to its first $L$ (random) steps.
The main purpose is to study whether the
process survives (globally), that is, whether there is a positive probability
that an infinite number of individuals become infected. The answer depends on
the trio $(N,L,\qn)$.
In Lebensztayn et al.~\cite{RWSZ,NHRWSZ}, the survival issue is investigated for similar models.
Kurtz et al.~\cite{CG} study this model on the complete graph and state limit theorems for the proportion of visited vertices. The question of local survival (infinite number of visits of active particles to the origin) in a model on~$\bbZ$ is studied by Gantert and Schmidt~\cite{GS}. For problems related to global survival of branching random walks on graphs, we refer to Bertacchi and Zucca~\cite{BZ} and to Bertachi {\it et al}~\cite{BMZ}. They also face questions related to local survival.

\section{Main Results}

%We study a main question about the process, namely, if, for a fixed set of parameters $N$, $L$, and $(q_n)_{n=1}^\infty$, the process has positive probability of survival.
%In the case of survival, we also investigate the problem of the existence of a real function of the parameters $N$, $L$, $(q_n)_{n=1}^\infty$ and a critical point such that the process has phase transition.
%To state our main results, we need a few definitions.

For each sequence $\qn$, let
\begin{equation*}
m(\qn)= \min \biggl\{M \in \bbN: \sum_{n=1}^{\infty} (q_n)^M < \infty \biggr\}.
\end{equation*}
Observe that $m(\qn)=\infty$ if and only if $\sum_{n=1}^{\infty} (q_n)^M=\infty$ for every $M>0$.
We also consider the following integers
\begin{equation}
\label{F: a}
b(N,L):=\begin{cases}
N (\frac{L+1}{2})^2 &\text{if } L \text{ is odd}, \\[0.2cm] 
N \frac{L(L+2)}{4} &\text{if } L \text{ is even}.
\end{cases}
\end{equation}

We highlight two sets of sequences of probabilities

\begin{align*}
D &:= \left\{(q_n)_{n=1}^\infty:m((q_n)_{n=1}^\infty)=\infty \right\}, \\[0.1cm]
D_1 &:= \left\{(q_n)_{n=1}^\infty:q_1 \geq q_2 \geq \dots \right\}. \\[0.1cm]
\end{align*}

Our main goal is to be able to tell for a large set of parameters 
$(N,L,(q_n)_{i=1}^{\infty})$
if \textit{the process survives with positive probability} (we write $\process$ $\surv$)
or \textit{the process dies out almost surely} (we write $\process$ $\dies$). With this in mind
we present the first results.

\eject

\begin{teo}
\label{T: OnC}
Let $\tilde{q}:=(q_n)_{n=1}^\infty \in D^c$.
\begin{itemize}
\item[\textit{(a)}] If $m(\tilde{q}) > b(N,L)$ and $\tilde{q} \in D_1$, then $\Gamma(N,L,\tilde{q})$ dies out a.s.
\item[\textit{(b)}] If $m(\tilde{q}) \leq b(N,L)$, then $\Gamma(N,L,\tilde{q})$ survives w.p.p.
\end{itemize}
\end{teo}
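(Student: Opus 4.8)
The plan is to reduce the whole question to a single geometric event --- a \emph{cut} --- and then to run the two halves of the Borel--Cantelli lemma against it. First I would record the elementary reformulation that, since $0\le q_n\le 1$, we have $m(\tilde q)\le b(N,L)\iff \sum_{n} q_n^{b(N,L)}<\infty$ and $m(\tilde q)>b(N,L)\iff\sum_n q_n^{b(N,L)}=\infty$; so both halves of the theorem are really statements about the convergence of $\sum_n q_n^{b(N,L)}$. I would also note that $\tilde q\in D^c$ forces $q_n\to0$, which is exactly what legitimises the leading-order asymptotics used below.

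For an edge $(\ell,\ell+1)$ let $C_\ell$ be the event that \emph{no} active particle ever traverses it rightward. Because every particle makes only $L$ nearest-neighbour steps, the only particles that can reach $\ell+1$ are those with home in $\{\ell-L+1,\dots,\ell\}$, and whether such a particle reaches $\ell+1$ is a quenched property of its own step sequence, independent of when (or whether) it is activated. Hence $C_\ell$ is the intersection, over the $NL$ particles with home in that band, of the \emph{independent} events ``this walk never reaches $\ell+1$''. I would first verify that the activated region on $\bbN$ is always an interval $[1,M_t]$ (nearest-neighbour motion together with contact activation), so that a single $C_\ell$ is a genuine cut: if some $C_\ell$ occurs the process activates only finitely many vertices and dies out almost surely, while if no $C_\ell$ occurs a one-line induction propagates the front to $+\infty$. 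Thus survival is equivalent to the nonoccurrence of all the $C_\ell$.

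The analytic core is the estimate $\bbP(C_\ell)\asymp\prod_{j=\ell-L+1}^{\ell} q_j^{\,N\,e(\ell+1-j)}$, where $e(d)$ is the minimal number of left steps in a length-$L$ walk that fails to reach distance $d$; a short counting argument (confine the walk below the target and maximise the number of right steps) gives $e(d)=\lceil (L-d+1)/2\rceil$, and summing over the band yields the exponent $N\sum_{s=1}^{L}\lceil s/2\rceil=N\lfloor (L+1)^2/4\rfloor=b(N,L)$, the parity of $L$ producing exactly the two cases defining $b(N,L)$ in~\eqref{F: a} (this parity is also what forces one to follow odd- and even-indexed quantities separately). For part~(b) I would write $\prod_j q_j^{N e(\ell+1-j)}=\prod_j (q_j^{b(N,L)})^{w_j}$ with weights $w_j=Ne(\ell+1-j)/b(N,L)$ summing to $1$ and apply weighted AM--GM to get $\sum_\ell\bbP(C_\ell)\lesssim\sum_n q_n^{b(N,L)}<\infty$; crucially this needs no monotonicity. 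The first Borel--Cantelli lemma then gives finitely many cuts almost surely, and survival with positive probability follows by forcing all homes $\le\ell_0$ to step right --- a positive-probability event that pushes the front to $\ell_0+L$ --- on the independent, positive-probability event that no cut occurs beyond $\ell_0+L$.

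For part~(a) I would instead use $\tilde q\in D_1$ together with $\sum_n q_n^{b(N,L)}=\infty$. Monotonicity gives $\bbP(C_\ell)\gtrsim q_\ell^{b(N,L)}$; choosing a subsequence $\ell_k$ with gaps larger than $2L$ makes the bands disjoint and the events $C_{\ell_k}$ independent, while a comparison of the decreasing series gives $\sum_k\bbP(C_{\ell_k})\gtrsim (2L)^{-1}\sum_n q_n^{b(N,L)}=\infty$. The second Borel--Cantelli lemma then produces infinitely many cuts almost surely, and a single cut already traps the front, so the process dies out almost surely. The main obstacle, in both parts, is the exponent lemma: pinning down the constant $b(N,L)$ \emph{exactly}, with the correct floor/parity, and simultaneously showing that the leading-order term of $\bbP(C_\ell)$ genuinely controls the sum (the weighted AM--GM step in~(b) and the monotone comparison in~(a)). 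The interval/propagation bookkeeping and the selection of independent, well-separated cuts are the remaining, more routine, ingredients.
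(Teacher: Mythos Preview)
Your approach is correct and essentially the same as the paper's: the cut events $C_\ell$ are exactly the events whose probability the paper calls $a_n$, your exponent $e(d)=\lceil (L-d+1)/2\rceil$ is the paper's $f(j)=\lfloor (j+1)/2\rfloor$ under the change of variable $j=L+1-d$, and the two halves are concluded via the same upper/lower bounds (Propositions~\ref{P: Prob geq} and~\ref{P: Prob leq}) fed into Borel--Cantelli together with monotonicity for~(a) and the weighted AM--GM bound for~(b). The only cosmetic difference is that for~(b) the paper iterates the recursive inequality $\bbP(E_{n+L+1})\ge \bbP(E_{n+L})(1-a_n)$ directly (Proposition~\ref{P: Survival}) rather than passing through first Borel--Cantelli and then forcing an initial segment; both routes rest on the same summability $\sum_n a_n<\infty$.
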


\begin{teo}
\label{T: OnD}
Let $\tilde{q}:=(q_n)_{n=1}^\infty \in D$.
\begin{itemize}
\item[\textit{(a)}] If $\tilde{q} \in D_1 $ then $\Gamma(N,L,\tilde{q}) \ \dies$
\item[\textit{(b)}] If $\tilde{q}$ is such that for every subsequence $(q_{n_k})_{k=1}^\infty \in D^c$, $\{n_{k+1}-n_k\}_{k \geq 1}$ is unbounded, then $\Gamma(N,L,\tilde{q})\ \dies$
\item[\textit{(c)}] If there exists a subsequence $ (q_{n_k})_{k=1}^\infty \in D^c$ such that $\{n_{k+1}-n_k\}_{k \geq 1}$ is bounded, then $\Gamma(N,L,\tilde{q})$ survives w.p.p. for large values of $N$ and $L$.
\end{itemize}
\end{teo}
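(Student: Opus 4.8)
Throughout I use the following description of the dynamics. Since the walks are nearest neighbour and all particles start on $\bbN$, the activated set is always an interval $[1,F]$, where $F$ is the current front; the front advances from $f$ to $f+1$ iff some already activated particle reaches $f+1$, and only the $N$ particles sitting at the $L$ vertices $f-L+1,\dots,f$ can do so within their lifetime. Writing $\rho_j(q,L)=P(\max_{0\le t\le L}S_t\ge j)$ for the maximal displacement of a length-$L$ walk $S$ with left-probability $q$, the event $A_f$ that the front is blocked at $f$ has probability $P(A_f)=\prod_{d=0}^{L-1}\bigl(1-\rho_{d+1}(q_{f-d},L)\bigr)^N$. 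A short combinatorial remark (a length-$L$ walk fails to reach level $j$ only if it makes at least $\lceil (L-j+1)/2\rceil$ left steps) gives, in the homogeneous case $q_n\equiv q$ and as $q\to0$, $P(A_f)\asymp q^{N\sum_{d=0}^{L-1}\lceil (L-d)/2\rceil}=q^{b(N,L)}$, the exponent being exactly $b(N,L)$ because $\sum_{d=0}^{L-1}\lceil (L-d)/2\rceil=\lfloor (L+1)^2/4\rfloor$. The whole argument rests on this estimate together with the fact that survival is equivalent to the front never being blocked, so a Borel--Cantelli analysis of $\sum_f P(A_f)$ decides the dichotomy. I also use the obvious monotone coupling: decreasing any $q_n$ only helps the front, so survival probabilities are monotone in $\tilde q$.

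For part (a) there is nothing new to do. If $\tilde q\in D\cap D_1$ then $m(\tilde q)=\infty>b(N,L)$, so $\sum_n q_n^{b(N,L)}=\infty$, and the sequence is non-increasing. These are precisely the two inputs used in the proof of Theorem~\ref{T: OnC}(a) (the hypothesis $\tilde q\in D^c$ there only serves to make $m$ finite, and is not used in the blocking estimate), so the same argument gives that $\Gamma(N,L,\tilde q)$ dies out a.s.

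For part (c) the plan is to exploit the hypothesis geometrically. Call $n_k$ the good vertices. If $(q_{n_k})_k\in D^c$ has gaps bounded by $C$, then every block of $C$ consecutive integers contains a good vertex, hence for $L\ge C$ every window $[f-L+1,f]$ does too. Keeping in $P(A_f)$ only the factor coming from that good vertex and bounding the tail by $1-\rho_{d+1}(q,L)\le c_L\,q^{\lceil (L-C+1)/2\rceil}$ for $d\le C-1$, I get $P(A_f)\le c_L^{\,N}\,q_{n_{k(f)}}^{\,N\lceil (L-C+1)/2\rceil}$ with $n_{k(f)}$ a good vertex in the window. Summing and noting that each good vertex is charged at most $C$ times, $\sum_f P(A_f)\le C\,c_L^{\,N}\sum_k q_{n_k}^{\,N\lceil (L-C+1)/2\rceil}$, which is finite once $N\lceil (L-C+1)/2\rceil\ge m((q_{n_k}))$, i.e. for $L\ge C$ and $N$ large. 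By the first Borel--Cantelli lemma only finitely many $A_f$ occur a.s.; choosing $f_0$ with $\sum_{f\ge f_0}P(A_f)<1$, with positive probability the front is never blocked beyond $f_0$, and as it reaches $f_0$ with positive probability (finitely many crossings, each of positive probability, combined through positive correlation of these increasing events), the process survives. This is exactly where ``large $N$ and $L$'' enters: $L\ge C$ so that one hop clears a gap, and $N$ large so that the good exponent beats $m((q_{n_k}))$.

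Part (b) is the delicate one and carries the main obstacle. First I restate the hypothesis: ``every $D^c$ subsequence has unbounded gaps'' is equivalent to ``every bounded-gap subsequence belongs to $D$.'' Fix blocks $B_j=\{(j-1)L+1,\dots,jL\}$ and let $\bar q_j=\min_{n\in B_j}q_n=q_{m_j}$; the indices $m_j$ have gaps at most $2L$, so $(\bar q_j)_j$ is a bounded-gap subsequence and hence lies in $D$, giving $\sum_j \bar q_j^{\,b(N,L)}=\infty$. By the monotone coupling it suffices to show that the easier block-constant process $\tilde q'$ with $q'_n=\bar q_{j(n)}$ dies out. For $\tilde q'$ consider the events $E_j$ that the front is blocked exactly at the right end $jL$ of block $j$: the window of level $jL$ is precisely $B_j$, so $E_j$ depends only on the $NL$ particles of block $j$ and the $E_j$ are independent. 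The homogeneous lower bound $P(E_j)\ge c\,\bar q_j^{\,b(N,L)}$ then yields $\sum_j P(E_j)=\infty$, so by the second Borel--Cantelli lemma infinitely many blocks block the front a.s.; the front cannot cross the right end of the first such block, hence $\tilde q'$ and therefore the original process die out a.s. The crux, which I expect to be the hardest step, is the two-sided homogeneous estimate $P(E_j)\asymp \bar q_j^{\,b(N,L)}$ with the exact exponent $b(N,L)$: the upper bound drives part (c) and the matching lower bound (obtained by exhibiting, for each block vertex, the cheapest family of blocking trajectories and multiplying the resulting $\lceil (L-d)/2\rceil$-powers over the $L$ vertices and $N$ particles) drives part (b). I would isolate this homogeneous computation, which already underlies Theorem~\ref{T: OnC}, as a lemma and invoke it in both directions.
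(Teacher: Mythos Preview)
Your proof is correct and follows essentially the same route as the paper: part~(a) is identical; for~(b) both you and the paper observe that the block minima $\bar q_j=\min_{n\in B_j}q_n$ form a bounded-gap subsequence which by hypothesis must lie in $D$, whence $\sum_j\bar q_j^{\,b(N,L)}=\infty$, and then apply the lower blocking bound with Borel--Cantelli; for~(c) both take $L$ at least the maximal gap so that every window contains a good vertex, keep only that factor, and choose $N$ large enough to beat $m((q_{n_k}))$. The monotone-coupling detour you insert in~(b) is unnecessary---the inequality $P(E_j)\ge\bar q_j^{\,b(N,L)}$ already holds for the original process, since each factor $(1-\rho_{d+1}(q_{f-d},L))^N\ge q_{f-d}^{\,N\lceil(L-d)/2\rceil}\ge\bar q_j^{\,N\lceil(L-d)/2\rceil}$---but it does no harm.
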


\begin{obs}
\label{R: CriteriaNonIncreasing}
From Theorems~\ref{T: OnC} and~\ref{T: OnD} we can present a criteria for sequences $\qn \in D_1$
\[ \process \ \surv \iff m(\qn) \leq b(N,L).\]
\end{obs}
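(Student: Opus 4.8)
The plan is to obtain the biconditional by combining the one-directional implications of Theorems~\ref{T: OnC} and~\ref{T: OnD}, after first recording that for a fixed triple $(N,L,\tilde q)$ the two events ``survives w.p.p.'' and ``dies out a.s.'' are logical complements: the survival probability is either strictly positive or equal to zero, with no third possibility, so no zero--one law is needed. Consequently it suffices to establish the two implications
\[
m(\tilde q)\le b(N,L)\ \Longrightarrow\ \process\ \surv,
\qquad
m(\tilde q)> b(N,L)\ \Longrightarrow\ \process\ \dies,
\]
throughout under the standing assumption $\tilde q\in D_1$.

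For the first implication, I would note that $b(N,L)$ is a finite positive integer by~\eqref{F: a}, so the hypothesis $m(\tilde q)\le b(N,L)$ forces $m(\tilde q)<\infty$, that is, $\tilde q\in D^c$. Theorem~\ref{T: OnC}(b) then applies directly and yields survival w.p.p. It is worth remarking that part~(b) carries no monotonicity requirement, so the hypothesis $\tilde q\in D_1$ is not even used in this direction.

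For the second implication I would split according to whether the exponent $m(\tilde q)$ is finite. If $\tilde q\in D^c$, then $m(\tilde q)<\infty$ and the hypothesis reads $b(N,L)<m(\tilde q)<\infty$; since $\tilde q\in D_1$, Theorem~\ref{T: OnC}(a) gives extinction a.s. If instead $\tilde q\in D$, then $m(\tilde q)=\infty>b(N,L)$, so the hypothesis $m(\tilde q)>b(N,L)$ is satisfied automatically, and extinction a.s.\ now follows from Theorem~\ref{T: OnD}(a), whose sole requirement is $\tilde q\in D_1$. In either sub-case the process dies out a.s., which completes the second implication and hence the equivalence.

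The argument is essentially bookkeeping, and I do not expect a genuine obstacle; the one point demanding care is the role of the membership $\tilde q\in D_1$, which is genuinely invoked in the extinction directions (Theorems~\ref{T: OnC}(a) and~\ref{T: OnD}(a)) but is superfluous in the survival direction. The only conceptual move is to fold the case $\tilde q\in D$, where $m(\tilde q)=\infty$, into the ``$m(\tilde q)>b(N,L)$'' side of the dichotomy; this is precisely what lets the single clean criterion of the Remark hold uniformly over all of $D_1$ rather than merely over $D_1\cap D^c$.
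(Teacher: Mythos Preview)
Your argument is correct and matches exactly what the paper intends: the Remark is stated without proof, merely noting that the equivalence follows from Theorems~\ref{T: OnC} and~\ref{T: OnD}, and your write-up simply spells out the case split (survival via Theorem~\ref{T: OnC}(b); extinction via Theorem~\ref{T: OnC}(a) when $\tilde q\in D^c$ and via Theorem~\ref{T: OnD}(a) when $\tilde q\in D$) that the paper leaves implicit.
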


\begin{exa}
For $q_n = \frac{1}{\log (n+2)},\ m(\qn) = \infty$. Then $\process$ $\dies$
\end{exa}

\begin{exa}
For $q_n = \frac{1}{\sqrt n},\ m(\qn) = 3$. Then 
\[ \process \ \surv \iff  b(N,L) \geq 3. \]
\end{exa}

\begin{exa}
Consider
\begin{equation*}
q_n:=\begin{cases}
\frac{1}{k} &\text{if } n = 2^k, \\[0.2cm] 
\frac{1}{\log (n+2)} &\text{otherwise }.
\end{cases}
\end{equation*}
Then $\process$ $\dies$ by part (b) of Theorem~\ref{T: OnD}.
\end{exa}

\begin{exa}
Consider
\begin{equation*}
q_n:=\begin{cases}
\frac{1}{k} &\text{if } n = 2 k, \\[0.2cm] 
\frac{1}{\log (k+2)} &\text{if } n=2k+1.
\end{cases}
\end{equation*}
Then $\process$ $\surv$ for $N$ and $L$ large enough by part (c) of Theorem~\ref{T: OnD}. To figure out how large $N$ and $L$ must be see Theorem~\ref{T: Final}.
\end{exa}

%\begin{exa}
%For
%\begin{equation*}
%q_n:=\begin{cases}
%\frac{1}{\log k} &\text{if } n = 2^k, \\[0.2cm] 
%\frac{1}{n} &\text{otherwise }.
%\end{cases}
%\end{equation*}
%Then ${\Gamma[N,1,(q_n)_{n=1}^{\infty}]}$ $\dies$ for all $N$.
%\end{exa}

%\begin{exa}
%For
%\begin{equation*}
%q_n:=\begin{cases}
%\frac{1}{\log k} &\text{if } n = 2 k, \\[0.2cm] 
%\frac{1}{n} &\text{otherwise }.
%\end{cases}
%\end{equation*}
%Then ${\Gamma[N,1,(q_n)_{n=1}^{\infty}]}$ $\dies$ for all $N$.
%\end{exa}

\section{Auxiliary results}

We start off with a basic but useful result.
The proof can be found in~Bremaud~\cite[p.~422]{Bremaud}.

\begin{lem}
\label{L: IP} 
Let $\{a_n\}_{n \geq 1}$ be a sequence of real numbers in $(0,1)$. Then, 
$$ \prod_{n=1}^\infty{(1-a_n)}=0 \Longleftrightarrow \sum_{n=1}^\infty a_n=\infty. $$
\end{lem}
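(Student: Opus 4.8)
The plan is to reduce the statement to a comparison of two series by passing to logarithms, exploiting the fact that every factor $1-a_n$ lies in $(0,1)$. Since each factor is positive and strictly less than $1$, the partial products $P_N := \prod_{n=1}^N (1-a_n)$ form a decreasing sequence bounded below by $0$; hence the infinite product always converges, and the only question is whether its limit is $0$ or strictly positive. Writing $P_N = \exp\bigl(\sum_{n=1}^N \log(1-a_n)\bigr)$ and using continuity of the exponential, I would observe that $\prod_{n=1}^\infty (1-a_n) = 0$ if and only if $\sum_{n=1}^\infty \bigl(-\log(1-a_n)\bigr) = +\infty$. The whole lemma is therefore equivalent to showing that the two nonnegative series $\sum_n \bigl(-\log(1-a_n)\bigr)$ and $\sum_n a_n$ diverge simultaneously.

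For the implication $\sum_n a_n = \infty \Rightarrow \prod_n(1-a_n)=0$, I would invoke the elementary inequality $\log(1-x) \le -x$, valid for all $x \in [0,1)$ (equivalently $1-x \le e^{-x}$). This gives $-\log(1-a_n) \ge a_n$ for every $n$, so $\sum_n \bigl(-\log(1-a_n)\bigr) \ge \sum_n a_n = \infty$ and the product vanishes. Equivalently, and perhaps more transparently, $P_N \le e^{-\sum_{n=1}^N a_n} \to 0$.

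For the reverse implication I argue the contrapositive: assume $\sum_n a_n < \infty$. Then $a_n \to 0$, so there is an index $M$ with $a_n \le 1/2$ for all $n \ge M$. On $[0,1/2]$ one has the matching upper bound $-\log(1-x) \le 2x$, which follows from $-\log(1-x) = \sum_{k \ge 1} x^k/k \le x \sum_{k \ge 0} x^k = x/(1-x) \le 2x$. Hence the tail satisfies $\sum_{n \ge M}\bigl(-\log(1-a_n)\bigr) \le 2 \sum_{n \ge M} a_n < \infty$, while the finitely many initial terms are finite because each $a_n \in (0,1)$ forces $1-a_n \in (0,1)$. Therefore $\sum_n \bigl(-\log(1-a_n)\bigr) < \infty$ and $\prod_n(1-a_n) > 0$. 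Combining the two directions yields the claimed equivalence. The only point requiring any care is the two-sided comparison $x \le -\log(1-x) \le 2x$ on a neighbourhood of $0$, and there is no genuine obstacle: this is a purely analytic fact about infinite products, entirely independent of the finite-lifetime and non-homogeneous features of the model, which is precisely why it can serve as a black box throughout the rest of the paper.
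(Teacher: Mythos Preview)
Your argument is correct: the two-sided comparison $a_n \le -\log(1-a_n) \le 2a_n$ (the upper bound valid once $a_n \le 1/2$) is exactly what is needed, and both inequalities are justified cleanly. The paper itself does not supply a proof of this lemma at all---it simply cites Br\'emaud~\cite[p.~422]{Bremaud}---so your self-contained treatment goes beyond what the paper provides and there is no alternative approach to compare against.
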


\begin{exa}
For $L=1$ and 
\begin{equation*}
q_n:=\begin{cases}
\frac{1}{\log k} &\text{if } n = 2^k, \\[0.2cm] 
\frac{1}{n} &\text{otherwise },
\end{cases}
\end{equation*} 
${\Gamma[N,1,(q_n)_{n=1}^{\infty}]} \ \dies$. Observe that as $L=1$ the probability for the
process to survive is $\prod_{i=1}^{\infty} (1-(q_i)^N)$. From Lemma~\ref{L: IP} follows that the later product equals 0 as $\sum_{i=1}^{\infty} (q_i)^N = \infty $ for all $N.$  For $L>1$ see Theorem~\ref{T: Final}.
\end{exa}

%$\sum_{i=1}^{\infty} (q_i)^N = \infty $ for all $N$ implies, from Lemma~\ref{L: IP}, that the probabilty for the process to survive, which is $\prod_{i=1}^{\infty} (1-(q_i)^N)$ equals 0.
%\end{exa}

Some definitions are needed to proceed. At time zero, at each site of 
$\bbN = \{ 1, 2, \dots \},$ there are $N$ particles that are able, once activated, to perform
discrete-time independent non-homogeneous nearest neighbor random walks on $ \bbZ $. We
can define a product space in such a way that all these walks are prescribed from the 
begining. We define

\begin{align*}
S_n^j(i): & \text{ the } j-th  \text{ random walk, } j = 1, \dots, N, \text{ starting from vertex } i \\[0.1cm]
& \text{ that at each step goes to the left with probability } q_i \text{ or to the  }  \\[0.1cm] 
& \text{ right with probability } 1-q_i. \text{ Here } 0 \le n \le L.  \\[0.1cm]
R_i^j &:= \{S_n^j(i) : 0 \le n \le L \}, \ \text{the virtual range of the } j-th \text{ particle }\\[0.1cm]
& \text{originally placed at site } i. \text{ It is prescribed from the beggining but}\\[0.1cm]  & \text{these random walks are activated only if vertex i is visited by an}\\[0.1cm]
& \text{active particle.} \\[0.1cm]
\end{align*}

\begin{align*}
R_i &= \bigcup_{j=1}^N R_i^j, \text{ the set of vertices prescribed to be visited by some of} \\[0.1cm]
& \text{the }N \text{ random walks of the particles originally placed at site } i.
\end{align*}

Next we define the event $\{i \rightarrow j\}$ as the event such that the vertex $j$ is 
prescribed to be visited by some of the $N$ random walks of the particles originally 
placed at site $i$. Observe that $\{i \rightarrow j\} = \emptyset$ if $j-i > L.$ Besides, $\{i \nseta j\}:=\{i \rightarrow j\}^c.$

Analogously the event $\{i \leadsto j\}$ holds if and only if there exists an finite sequence of distinct vertices $ i = i_0, i_1, i_2, \dots, i_m = j $ such that, for all $n = 0, \dots, m-1$,
\[ i_{n+1} \in R_{i_n}. \]

%\[ 1_{\{i \rightarrow j\}}(\omega) = 1_{\{j \in R_i\}} (\omega)\]
%and that
%\[ 1_{\{i \leadsto j\}}(\omega) = 1_{\{j \in R_i\}} (\omega)\]
%Observe that
%\[ 1_{\{i \rightarrow j\}}(\omega) = 0 \text{ if } |j-i| > L\]

With this last definition in mind, notice that $\process \ survives $ for some particular realization of the process, if and only if there exists an infinite sequence of distinct vertices $ 0 = i_0, i_1, i_2 \dots $ such that, for all $n$
\[ i_{n+1} \in R_{i_n}. \]

For $ i \in \bbN $, we define the event $ E_i := \{1 \leadsto i\} $ which in words is the event
where the particles at vertex $i$ are activated, as all particles originally placed at vertex $1$ are
activated from the begining.

%Observe that "virtually" here means that $R_i = \{i-L, \dots, i+L\}$ and $E_i$ are independent 
%events for all $i$. 
%For $ i,j \in \bbN $, let 
%
%\[ \{i \rightarrow j\}:=\{j \in R_i\} \text{ and } \{i \nseta j\}:=\{i \rightarrow j\}^c .\]

For what follows it is important to realize that the event $E_i$ and the event $\{i \rightarrow j\}$ are
independent as each of them depends on the displacements of different sets of random walks.

Now we prove some auxiliary results that will lead us to the proofs of Theorems~\ref{T: OnC} and~\ref{T: OnD}. In order to do that, first consider blocks of size $L$ like

\[ \cA_n = \{ n+1, n+2,\dots,n+L \}. \]
 
Besides, for any sequence $(r_n)_{n=1}^{\infty} \in \bbN$, such that 
$r_{n+1}-r_n \geq L$, we highlight a special subset of $\cA_n,$

\[ \cA_{r_n} = \{ r_n+1,\dots,r_n+L \}. \]

Notice that the particles in $n+L+1$ can only be activated by some particle whose
original position is in $\cA_n$.
We denote the probability that the vertex $n +L+ 1$ is not visited by some particle initially placed in $\cA_n$ by

\[ \an:=\prod_{i=n+1}^{n+L} \bbP(i \nseta n+L+1). \]

Analogously we denote the probability that the vertex $(r_n+L+1)$ is not visited by some particle initially placed in $\cA_{r_n}$ by

$$ \arn:=\prod_{i=r_n+1}^{r_n+L} \bbP(i \nseta r_n+L+1). $$

Our first auxiliary result gives a simple condition for the almost sure extinction of the process.

\begin{prop} We have that
\label{P: Extinction}
\[ \sum_{n=1}^{\infty} \arn = \infty \Rightarrow \process \ \dies \] 
\end{prop}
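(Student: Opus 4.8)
The plan is to exhibit, along the subsequence $(r_n)$, an infinite family of independent \emph{barrier} events, each of which --- when it occurs --- confines the set of activated vertices to a bounded region, and then to invoke the second Borel--Cantelli lemma to guarantee that at least one such barrier forms almost surely.

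Concretely, for each $n$ let $F_n$ denote the event that \emph{no} particle originally placed in $\cA_{r_n} = \{r_n+1, \dots, r_n+L\}$ ever reaches the vertex $r_n + L + 1$; that is, $F_n = \bigcap_{i=r_n+1}^{r_n+L}\{i \nseta r_n + L + 1\}$. Since the events $\{i \nseta r_n+L+1\}$ involve the $N$ walks issued from distinct vertices $i$, they are independent, whence $\bbP(F_n) = \arn$. First I would check that the family $\{F_n\}_{n\ge 1}$ is itself independent: the hypothesis $r_{n+1} - r_n \ge L$ forces the blocks $\cA_{r_n}$ to be pairwise disjoint, so distinct $F_n$ depend on disjoint collections of random walks. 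Given $\sum_n \arn = \infty$, the second Borel--Cantelli lemma then yields $\bbP(\limsup_n F_n) = 1$; in particular, almost surely at least one $F_n$ occurs.

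The heart of the argument is the claim that $F_n$ acts as an impenetrable barrier: on $F_n$, no vertex $v \ge r_n + L + 1$ is ever activated. To establish this I would argue by contradiction through a last-crossing (first-passage) analysis. Suppose some $v \ge r_n + L + 1$ satisfies $E_v = \{1 \leadsto v\}$, and fix an activating chain $1 = i_0, i_1, \dots, i_m = v$ with $i_{t+1} \in R_{i_t}$. Since $i_0 = 1 < r_n + L + 1 \le i_m$, let $k$ be the last index with $i_k < r_n + L + 1$. Because $R_{i_k} \subseteq \{i_k - L, \dots, i_k + L\}$ and $i_{k+1} \ge r_n + L + 1$, the bound $i_{k+1} \le i_k + L$ forces $i_k \ge r_n + 1$, so $i_k \in \cA_{r_n}$; moreover the nearest-neighbor walk realizing $i_{k+1} \in R_{i_k}$ starts at $i_k \le r_n + L$ and reaches $i_{k+1} \ge r_n + L + 1$, hence must pass through $r_n + L + 1$. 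Thus some particle from $\cA_{r_n}$ reaches $r_n + L + 1$, contradicting $F_n$. This is precisely the observation, recorded before the statement, that $r_n + L + 1$ can only be activated from within $\cA_{r_n}$, made quantitative.

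Finally I would assemble the pieces. On the almost-sure event that some $F_n$ occurs, every activated vertex lies in $\{1, \dots, r_n + L\}$ (vertices $\le 0$ carry no particles), so only finitely many vertices --- hence finitely many particles --- are ever activated, which is exactly $\process \ \dies$. The main obstacle is the barrier claim of the third paragraph: one must rule out that the process circumvents the barrier by crossing it ``from the right'' or through a non-monotone chain, and this is where the discrete intermediate-value property of nearest-neighbor walks (any walk going from below $r_n+L+1$ to a point at or above it must hit $r_n+L+1$), together with the last-crossing choice of $k$, is essential. The Borel--Cantelli step is then routine once the independence of $\{F_n\}$, guaranteed by the spacing $r_{n+1}-r_n \ge L$, is in hand.
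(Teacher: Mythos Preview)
Your proof is correct and follows the same strategy as the paper: define the barrier events $F_n=\bigcap_{j=1}^{L}\{r_n+j\nseta r_n+L+1\}$ (the paper calls them $B_n$), note $\bbP(F_n)=\arn$, and apply Borel--Cantelli. You are simply more careful than the paper on two points it leaves implicit: you justify the independence of the $F_n$ via the spacing $r_{n+1}-r_n\ge L$ (needed for the \emph{second} Borel--Cantelli lemma), and you give a rigorous last-crossing argument for the barrier claim, whereas the paper just asserts that if $B_n$ occurs then $r_n+L+1$ is never visited.
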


\begin{proof}
Let $B_n = \bigcap_{j=1}^{L} \{r_n+j \nseta r_n+L+1 \}$. 
If $B_n$ occurs for some $n$, then the vertex $(r_n+L+1)$ would not be visited and the process would die out a.s.\ 
Now, as
$$ \sum_{i=1}^{\infty} \bbP(B_n) = \sum_{n=1}^{\infty} \arn = \infty, $$
we obtain, from Borel-Cantelli Lemma, that $\bbP(B_n \text{ infinitely often}) = 1$.
This implies the result.
\end{proof}

We denote the integer part function by $\lfloor \cdot \rfloor$.
For $j \in \{1,2,\dots,L\}$, let
$$ f(j) := \Big\lfloor \frac{j+1}{2}\Big\rfloor. $$
The next proposition will be useful in order to obtain a new condition for extinction (to be derived from Proposition~\ref{P: Extinction}).

\begin{prop}
\label{P: Prob geq}
Let $j \in \{1,2,\dots,L\}$.
We have that
\begin{equation}
\label{E: Prob geq}
\bbP(r_n+j \nseta r_n+L+1) \geq (q_{r_n+j})^{N f(j)}.
\end{equation}
\end{prop}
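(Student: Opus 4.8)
The plan is to exploit the product structure coming from the $N$ independent walks that share the starting site $r_n+j$, and then reduce the whole statement to a one-walk estimate. Write $i := r_n+j$ and $k := r_n+L+1$, so the target site sits at distance $d := k-i = L+1-j$ strictly to the right of the starting site. Since the $N$ walks $S^1_n(i),\dots,S^N_n(i)$ are independent and identically distributed, the event $\{i \nseta k\}$ (no walk prescribes a visit to $k$) factorizes as the intersection over $j=1,\dots,N$ of the single-walk non-visit events, so that $\bbP(i \nseta k) = p^N$, where $p$ is the probability that one walk started at $i$ fails to reach $k$ within its $L$ steps. It therefore suffices to prove the one-walk bound $p \geq (q_{r_n+j})^{f(j)}$ and raise it to the $N$-th power.

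To bound $p$ from below I would isolate a single explicit favorable event. Because the walk is nearest neighbor and starts with partial sum $0$, it reaches $k = i+d$ if and only if its partial-sum process attains level $d$; the idea is to force enough initial left steps to make this impossible. Concretely, consider the event that the first $f(j)$ steps of the walk are all to the left. On this event the walk sits at $i - f(j)$ after $f(j)$ steps, and even if every one of the remaining $L - f(j)$ steps goes right, the farthest point it can reach is $i - f(j) + (L - f(j)) = i + (L - 2f(j))$. Hence the walk never reaches $k$ provided $L - 2f(j) < d$, i.e. $2f(j) > j-1$. Each of the $f(j)$ prescribed left steps has probability $q_{r_n+j}$ and they are independent, so this favorable event has probability exactly $(q_{r_n+j})^{f(j)}$, giving $p \geq (q_{r_n+j})^{f(j)}$ and the proposition.

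The only real point to check, and the step I would treat as the crux, is the arithmetic inequality $2f(j) > j-1$ with $f(j) = \lfloor (j+1)/2 \rfloor$, which is precisely what makes $f(j)$ the \emph{smallest} number of forced left steps that does the job. This is a short parity case analysis: when $j = 2s+1$ is odd, $f(j) = s+1$ and $2f(j) = 2s+2 > 2s = j-1$; when $j = 2s$ is even, $f(j) = s$ and $2f(j) = 2s > 2s-1 = j-1$. Thus in either case $f(j)$ initial left steps suffice to pin the walk strictly to the left of $k$, which is what yields the exponent $N f(j)$ in the stated bound. I expect no further obstacle, since the independence of the $N$ walks and of their individual increments is built into the construction of the ranges $R_i^j$.
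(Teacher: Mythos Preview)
Your argument is correct and is essentially the paper's own proof: force the first $f(j)$ steps of every one of the $N$ walks at $r_n+j$ to go left, so that even with the remaining $L-f(j)$ steps all to the right the walk ends at $r_n+j+L-2f(j)<r_n+L+1$. You add the explicit factorization $\bbP(i\nseta k)=p^N$ and the parity check of $2f(j)>j-1$, which the paper leaves implicit, but the underlying idea is identical.
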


\begin{proof}
Observe that, if every particle originally placed at vertex $(r_n+j)$ would make its first $f(j)$ 
jumps to the left, then $\{r_n+j \nseta r_n+L+1\}$ would occur.
This happens since, in the best case, this particle makes $(L-f(j))$ jumps to the right, thus the rightmost vertex it would visit during its life would be 
\begin{align*}
s &:= r_n+j+(L-f(j))-f(j)< r_n+L+1 .
\end{align*}
%Otherwise, if the particles initially at vertex $(r_n+j)$ are not activated, then $\{r_n+j \nseta r_n+L+1\}$ also occurs.
%
Hence,
\[ \bbP (r_n+j \nseta r_n+L+1) \geq (q_{r_n+j})^{N f(j)} . \]
\end{proof}

Now it is easy to see that
\begin{equation*}
\sum_{j=1}^{L} f(j) = \begin{cases}
(\frac{L+1}{2})^2 &\text{if } L \text{ is odd}, \\[0.2cm] 
 \frac{L(L+2)}{4} &\text{if } L \text{ is even}.
\end{cases}
\end{equation*}
Recalling~\eqref{F: a}, we have that $b(N,L)=N\sum_{j=1}^{L} f(j)$.
Consequently,

\begin{prop}
\label{P: dies}
Let $\bar{q}_n= \min \{q_{nL+1},\dots,q_{nL+L} \}$. 
\[ \sum_{n=0}^{\infty} (\bar{q}_n)^{b(N,L)}=\infty \Rightarrow \process \ \dies \]
\end{prop}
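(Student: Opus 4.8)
The plan is to deduce this directly from Proposition~\ref{P: Extinction}, after choosing the subsequence $(r_n)$ so that the blocks $\cA_{r_n}$ coincide with the consecutive windows over which the minima $\bar{q}_n$ are taken. Since $\bar{q}_n = \min\{q_{nL+1}, \dots, q_{nL+L}\}$, the natural choice is $r_n = nL$ for $n \geq 1$. This lies in $\bbN$ and satisfies the spacing requirement $r_{n+1} - r_n \geq L$ (with equality), so that Proposition~\ref{P: Extinction} applies and $\cA_{r_n} = \{nL+1, \dots, nL+L\}$ is exactly the window defining $\bar{q}_n$.

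With this choice I would start from
\[ \arn = \prod_{j=1}^{L} \bbP(r_n + j \nseta r_n + L + 1), \]
apply Proposition~\ref{P: Prob geq} to each factor to get $\arn \geq \prod_{j=1}^{L} (q_{r_n+j})^{N f(j)}$, and then replace each base by the smaller quantity $\bar{q}_n$. The latter step uses only that $q_{r_n+j} \geq \bar{q}_n$ by definition of the minimum and that $t \mapsto t^c$ is increasing on $(0,1)$ for $c > 0$, so that $(q_{r_n+j})^{N f(j)} \geq (\bar{q}_n)^{N f(j)}$. Multiplying over $j$ and recalling that $b(N,L) = N \sum_{j=1}^{L} f(j)$ yields the clean bound $\arn \geq (\bar{q}_n)^{b(N,L)}$.

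Summing this over $n \geq 1$ and using the hypothesis gives $\sum_{n=1}^{\infty} \arn \geq \sum_{n=1}^{\infty} (\bar{q}_n)^{b(N,L)} = \infty$, the last sum differing from $\sum_{n=0}^{\infty} (\bar{q}_n)^{b(N,L)}$ only by the single finite term $(\bar{q}_0)^{b(N,L)}$; Proposition~\ref{P: Extinction} then delivers almost sure extinction. I do not expect a genuine obstacle here, since the probabilistic substance is entirely contained in Propositions~\ref{P: Extinction} and~\ref{P: Prob geq}, and this statement is essentially a corollary obtained by optimizing the choice of blocks. The only points deserving care are verifying that $r_n = nL$ is admissible (the blocks $\cA_{r_n}$ being pairwise disjoint with the required spacing) and noting that the shift between the index ranges $n \geq 0$ and $n \geq 1$ is harmless for divergence.
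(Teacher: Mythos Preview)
Your proposal is correct and follows essentially the same route as the paper: choose $r_n=nL$, apply Proposition~\ref{P: Prob geq} termwise, bound each $q_{nL+j}$ below by $\bar q_n$, use $b(N,L)=N\sum_j f(j)$, and invoke Proposition~\ref{P: Extinction}. Your extra care about the admissible index range ($n\ge 1$ versus $n\ge 0$) is justified and does not alter the argument.
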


\begin{proof}
It follows from Proposition~\ref{P: Prob geq} and then from Proposition~\ref{P: Extinction}, since
\begin{align*}
\sum_{n=0}^{\infty} a_{nL}(N,L,(q_n)_{n=1}^{\infty}) &\geq \sum_{n=0}^{\infty} 
(q_{nL+1})^{N f(1)}(q_{nL+2})^{N f(2)}\dots(q_{nL+L})^{N f(L)} \\
 &\geq \sum_{n=0}^{\infty} (\bar{q}_n)^{b(N,L)}=\infty.
\end{align*}
\end{proof}

The following result (whose proof we omit) together with Proposition~\ref{P: dies} 
are the keys for understand what sequences $\qn \in D_1$ that will
lead $\process$ to extintion almost surely.

\begin{lem}
\label{L: S infty}
Let $\qn \in D_1$ and define $a:=b(N,L)$.
Define $S:=\sum_{n=1}^{\infty} (q_n)^a $ and 
$S_j:=\sum_{n=0}^{\infty} (q_{nL+j})^a$, $j \in \{1,2,\dots,L\}$.
If $S=\infty$, then $S_j=\infty$ for all $j$.
\end{lem}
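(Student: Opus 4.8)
The plan is to exploit two facts: first, that the full sum $S$ decomposes as the finite sum $S=\sum_{j=1}^{L}S_j$; and second, that the monotonicity $q_1 \geq q_2 \geq \cdots$ lets me compare the $S_j$ with one another up to a bounded error. Since $a=b(N,L)$ is a positive integer and $x \mapsto x^a$ is increasing on $[0,\infty)$, the sequence $\big((q_n)^a\big)_{n\geq 1}$ is itself non-increasing, which is all I will use.

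First I would record the decomposition. Writing every positive integer uniquely as $n = mL+j$ with $m \geq 0$ and $j \in \{1,\dots,L\}$ gives
\[ S = \sum_{n=1}^{\infty}(q_n)^a = \sum_{j=1}^{L}\sum_{m=0}^{\infty}(q_{mL+j})^a = \sum_{j=1}^{L} S_j. \]
In particular $S=\infty$ forces at least one $S_j$ to diverge, but the content of the lemma is to upgrade this to all of them.

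The key step is a one-block shift. For each $j$ and each $m \geq 1$ the index satisfies $mL+j \geq mL$, so monotonicity yields $q_{mL+j}\leq q_{mL}$, and $q_{mL}$ is exactly the term of $S_L$ with running index $m-1$. Separating the $m=0$ term, this gives
\[ S_j = (q_j)^a + \sum_{m=1}^{\infty}(q_{mL+j})^a \leq (q_j)^a + \sum_{m=1}^{\infty}(q_{mL})^a = (q_j)^a + S_L. \]
Summing over $j$ and using $(q_j)^a \leq 1$, hence $\sum_{j=1}^{L}(q_j)^a \leq L < \infty$, I obtain $S \leq L + L\,S_L$, so $S=\infty$ immediately forces $S_L=\infty$.

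It then remains to pass from $S_L$ to every $S_j$, and here monotonicity works in the easy direction: for each $j$ and every $m$ one has $mL+j \leq mL+L$, whence $q_{mL+j}\geq q_{mL+L}$ and therefore $S_j \geq S_L$ term by term. Combined with $S_L=\infty$, this gives $S_j=\infty$ for all $j$, as claimed. I do not anticipate a genuine obstacle: the only point requiring care is the direction of the comparison in the block-shift inequality, namely dominating the $m$-th term of $S_j$ by the $(m-1)$-th term of $S_L$ rather than the reverse; everything else is bookkeeping.
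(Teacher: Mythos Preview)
Your argument is correct. The decomposition $S=\sum_{j=1}^{L}S_j$, the block-shift inequality $S_j \le (q_j)^a + S_L$, and the termwise comparison $S_j \ge S_L$ are all valid consequences of the monotonicity hypothesis $(q_n)\in D_1$ together with $0\le q_n\le 1$, and they combine exactly as you describe to give the conclusion. The paper itself omits the proof of this lemma, so there is nothing to compare against; your write-up would serve as a complete and clean proof.
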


To finish the section, we state two auxiliary results concerning the survival of the process.

\begin{prop}
\label{P: Survival}
%If $\sum_{n=0}^{\infty} \an < \infty$,
%then $\process$ \surv
%(therefore $E_n$ for all $n$).
We have that
\[ \sum_{n=0}^{\infty} \an < \infty \Rightarrow \process \ \surv \]
\end{prop}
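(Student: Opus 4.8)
The plan is to reduce global survival to the activation of infinitely many vertices and then to build, with positive probability, a self-sustaining rightward chain of fully activated blocks. For each $n \ge 0$ let $V_n$ denote the event that the vertex $n+L+1$ is visited by at least one particle originally placed in $\cA_n=\{n+1,\dots,n+L\}$; since the events $\{i \nseta n+L+1\}$ for distinct $i$ depend on disjoint families of walks, they are independent and $\bbP(V_n^c)=\prod_{i=n+1}^{n+L}\bbP(i\nseta n+L+1)=a_n$. The driving observation is the following propagation step: if the whole block $\cA_n$ is activated and $V_n$ occurs, then the particle realizing $V_n$ sits at an active site and is therefore active, so $n+L+1$ becomes activated; hence $\cA_{n+1}=\{n+2,\dots,n+L+1\}$ is also entirely activated. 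Iterating, on $\bigcap_{n\ge m}V_n$ the activation of a single full block $\cA_m$ forces the activation of every vertex $\ge m+1$, which is exactly global survival.

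It therefore suffices to produce, with positive probability, one fully activated seed block together with the tail event $\bigcap_{n\ge m}V_n$. Since the series $\sum_{n\ge 0} a_n$ converges, I would first fix $m$ large enough that $\sum_{n\ge m} a_n <1$; a union bound then gives $\bbP\bigl(\bigcap_{n\ge m}V_n\bigr)\ge 1-\sum_{n\ge m}a_n>0$. For the seed I would use only the walks of the particles at sites $1,\dots,m$: let $H_m$ be the event on which, for each $k\in\{1,\dots,m-1\}$, some particle from $k$ visits $k+1$, and in addition the first particle from $m$ makes all of its $L$ jumps to the right. On $H_m$ the activation passes from vertex $1$ up to vertex $m$, and the all-right particle from $m$ sweeps through $m+1,\dots,m+L$, so $\cA_m$ is entirely activated. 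Because these requirements concern pairwise distinct sites, $H_m$ is an intersection of finitely many independent positive-probability events, giving $\bbP(H_m)\ge \prod_{k=1}^{m-1}(1-q_k)\,(1-q_m)^{L}>0$.

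The final and most delicate point is the independence bookkeeping that lets the two pieces be combined. I would observe that $H_m$ is measurable with respect to the walks at sites $\le m$, whereas each $V_n$ with $n\ge m$ depends only on the walks at sites in $\cA_n\subseteq\{m+1,m+2,\dots\}$; hence $H_m$ and $\bigcap_{n\ge m}V_n$ are independent. Combining the propagation step with this independence yields
\[ \bbP(\process \text{ survives}) \ \ge\ \bbP(H_m)\,\bbP\Bigl(\bigcap_{n\ge m}V_n\Bigr)\ \ge\ \bbP(H_m)\Bigl(1-\sum_{n\ge m}a_n\Bigr)\ >\ 0, \]
which is the desired conclusion. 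The main obstacle is precisely to arrange the seed so that it (i) is genuinely guaranteed to activate a \emph{full} block rather than a single vertex, since the propagation step needs all of $\cA_m$ active, and (ii) uses a block of sites disjoint from those governing the tail events $V_n$, so that the convergence of $\sum_n a_n$ can be exploited on a region independent of the seed; pushing the seed out to an index $m$ with $\sum_{n\ge m}a_n<1$ is what makes both requirements compatible.
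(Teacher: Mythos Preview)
Your argument is correct and takes a genuinely different route from the paper. The paper establishes a one-step recursive inequality $\bbP(E_{n+L+1}) \ge \bbP(E_{n+L})\,(1-a_n)$ by writing $E_{n+L+1}$ as a disjoint union over the rightmost site in $\cA_n$ that reaches $n+L+1$; the resulting sum telescopes exactly to $1-a_n$, and iterating yields $\bbP(E_{n+L+1}) \ge \bbP(E_L)\prod_{k=1}^{n}(1-a_k)$, after which Lemma~\ref{L: IP} turns the summability of $(a_n)$ into positivity of the infinite product. Your approach avoids both the telescoping decomposition and the product lemma: you separate a finite seed event $H_m$, measurable with respect to the walks at sites $\le m$, that deterministically activates a full block $\cA_m$, and you control the infinite tail $\bigcap_{n\ge m}V_n$ by a plain union bound, having first pushed $m$ far enough that $\sum_{n\ge m}a_n<1$. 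This is more elementary and makes the independence structure completely explicit, at the price of a cruder quantitative estimate (you obtain $1-\sum_{n\ge m}a_n$ in place of the paper's $\prod_k(1-a_k)$). Conversely, the paper's recursion never needs to isolate a seed or invoke independence between a seed and a tail; it tracks $\bbP(E_n)$ directly. Both arguments tacitly use that the $q_k$ are strictly less than $1$ (you for $\bbP(H_m)>0$, the paper for $\bbP(E_L)>0$), so this is not a discrepancy.
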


\begin{proof} 
Notice that
\begin{align*}
E_{n+L+1} &= (E_{n+L} \cap \{n+L \rightarrow n+L+1 \}) \cup \\[0.1cm] &\bigcup_{i=n+1}^{n+L-1} 
\Big(E_i \cap \{i \rightarrow n+L+1 \} \cap \bigcap_{j=i+1}^{n+L} \{j \nseta n+L+1\}\Big).
\end{align*}
Observing that $\bbP(E_n)$ is non-increasing in $n$, we obtain the following telescopic sum
\begin{align*}
\bbP(E_{n+L+1}) &\geq \bbP(E_{n+L}) \Big[\bbP(n+L \rightarrow n+L+1)\\[0.1cm]
&+\sum_{i=n+1}^{n+L-1} \bbP(i \rightarrow n+L+1) \prod_{j=i+1}^{n+L} \bbP(j \nseta n+L+1)\Big]\\[0.1cm]
&= \bbP(E_{n+L})\Big[1-\prod_{i=n+1}^{n+L} \bbP(i \nseta n+L+1)\Big].
\end{align*}
Iterating this formula $n$ times
\begin{align*}
\bbP(E_{n+L+1}) \geq \bbP(E_{L}) \prod_{k=1}^n \Big[1-\prod_{i=k+1}^{k+L} \bbP(i \nseta k+L+1)\Big].
\end{align*}

Now passing to the limit, using the hypotheses and Lemma~\ref{L: IP} we have that
\[ \prod_{n=1}^\infty\Big[1-\prod_{i=n+1}^{n+L} \bbP(i \nseta n+L+1)\Big]>0. \]
From this we conclude that the process has positive probability of survival.
\end{proof}

\begin{prop}
\label{P: Prob leq}
Let $j \in \{1,2,\dots,L\}$.
\begin{equation}
\label{E: Prob leq}
\bbP(n+j \nseta n+L+1) \leq 2^{N L}(q_{n+j})^{N f(j)}.
\end{equation}
\end{prop}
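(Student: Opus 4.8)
The plan is to reduce the bound to a statement about a \emph{single} random walk and then finish with a crude binomial estimate. The event $\{n+j \nseta n+L+1\}$ requires that none of the $N$ independent walks started at vertex $n+j$ ever reaches $n+L+1$ during its $L$ steps, and since $n+L+1 \notin R_{n+j}$ holds iff $n+L+1 \notin R_{n+j}^k$ for every $k$, independence of the walks gives
\[ \bbP(n+j \nseta n+L+1) = p^N, \]
where $p$ is the probability that one such walk fails to visit $n+L+1$. It therefore suffices to prove $p \leq 2^L (q_{n+j})^{f(j)}$ and raise both sides to the $N$-th power.

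For the single-walk bound I write $q:=q_{n+j}$ and let $S_0=0,S_1,\dots,S_L$ be the displacement of one walk from its starting vertex, so that visiting $n+L+1$ is equivalent to $\max_{0\leq k\leq L} S_k \geq d$, where $d:=(n+L+1)-(n+j)=L+1-j$. The central observation---and the step I expect to be the crux---is that a walk failing to reach $d$ must make at least $f(j)$ left steps. Indeed, if the walk makes $\ell$ left steps in total then $S_L=L-2\ell$, and using $2f(j)\leq j+1$ (immediate from $f(j)=\lfloor (j+1)/2\rfloor$) one checks that $\ell\leq f(j)-1$ forces $S_L\geq L-2f(j)+2\geq L-j+1=d$. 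Since the walk is nearest-neighbor and ends at $S_L\geq d>0=S_0$, it must pass through $d$ by discrete intermediate value, so it does reach $n+L+1$. Contraposing, failure to reach implies at least $f(j)$ left steps; this is precisely the dual of the argument used in Proposition~\ref{P: Prob geq}.

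With this inclusion of events in hand, the remainder is routine. Letting $\ell$ denote the number of left steps, which is Binomial with parameters $L$ and $q$, I obtain
\[ p \leq \bbP(\ell\geq f(j)) = \sum_{k=f(j)}^{L}\binom{L}{k} q^k (1-q)^{L-k} \leq q^{f(j)} \sum_{k=0}^{L}\binom{L}{k} = 2^L q^{f(j)}, \]
where I bounded $(1-q)^{L-k}\leq 1$ and $q^k\leq q^{f(j)}$ for $k\geq f(j)$. Raising to the $N$-th power yields $\bbP(n+j \nseta n+L+1)\leq 2^{NL}(q_{n+j})^{Nf(j)}$, as claimed. The only genuinely delicate point is the left-step count via the endpoint $S_L$; the rest is a deterministic inequality together with the trivial identity $\sum_k \binom{L}{k}=2^L$.
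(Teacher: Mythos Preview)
Your proof is correct and follows essentially the same approach as the paper: both establish that a single walk failing to reach $n+L+1$ must make at least $f(j)$ left steps (via the endpoint $S_L$ and the discrete intermediate value property), and then deduce the bound. Your version is in fact more explicit than the paper's, which states the key claim but then jumps directly to the conclusion without spelling out the binomial estimate $\bbP(\ell\geq f(j))\leq 2^L q^{f(j)}$ or the reduction to a single walk via independence.
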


\begin{proof}
First notice that, if $e$ is an integer and $e<f(j)$, then 
$$1 \leq j-2e.$$
Indeed, $e<f(j) \leq (j+1)/2$ implies that $2e<2f(j) \leq j+1$, thus
$$2 \leq 2(f(j)-e) \leq j-2e+1.$$

The proposition is an immediate consequence of the following claim: if a particle at vertex $(n+j)$, in its virtual trajectory, does not visit
the vertex $(n+L+1)$, then it makes at least $f(j)$ jumps to the left. 
To prove this claim, we observe that, if a particle at vertex $(n+j)$ makes $e$ jumps to the left, then it makes $(L-e)$ jumps to the right, so at the end of its $L$ jumps it will be at vertex 
$$ s:=(n+j)+(L-e)-e=n+L+j-2e. $$
Now if $e<f(j)$, then $s \geq n+L+1$ and consequently the particle would visit the vertex $(n+L+1)$.
Then 
\[ \bbP(n+j \nseta n+L+1) \leq 2^{N L} (q_{n+j})^{N f(j)}.\]
\end{proof}

\section{Proofs of main results}

%From Proposition~\ref{P: dies} and Lemma~\ref{L: S infty}, we derive two results, which will be used in our proofs.
%
%\begin{lem}
%\label{L: dies}
%If $q_1 \geq q_2 \geq \dots$ and $a<m$, then the process dies out a.s.
%\end{lem}
%
%\begin{proof}
%Notice that $q_{nL+L}= \min \{q_{nL+1},q_{nL+2},\dots,q_{nL+L} \}$.
%Since $a<m$, we have that $S=\sum_{n=1}^{\infty} (q_n)^a=\infty$, hence, from Lemma~\ref{L: S infty}, $S_L=\sum_{n=1}^{\infty} (q_{nL})^a=\infty$.
%The result follows from Proposition~\ref{P: dies}.
%\end{proof}
%
%\begin{lem}
%\label{L: Subs}
%If $(q_n)_{n=1}^\infty \in D$ is such that for every subsequence 
%$(q_{n_k})_{k=1}^\infty \in D^c$, $\{n_{k+1}-n_k\}_{k \geq 1}$ is unbounded, then the process dies out a.s.
%\end{lem}
%
%\begin{proof}
%Let $L \in \bbN$ and let $\bar{q}_n= \min \{q_{nL+1},q_{nL+2},\dots,q_{nL+L} \}$. 
%By the hypotheses, $\sum_{n=0}^{\infty} (\bar{q}_n)^a <\infty$ can not hold.
%Hence, $\sum_{n=0}^{\infty} (\bar{q}_n)^a=\infty$ and from Proposition~\ref{P: dies} the process dies out a.s
%for all $L, N \in \bbN$.
%\end{proof}

\begin{proof}[Proof of Theorem~\ref{T: OnC} (a)]
%Notice that $q_{(n+1)L}= \min \{q_{nL+1},\dots,q_{(n+1)L} \},$ since $\qn \in D_1$.
Since $\qn \in D_1$ it follows that 
\[q_{(n+1)L}= \min \{q_{nL+1},\dots,q_{(n+1)L} \}.\]

As $b(N,L)<m(\qn)$, we have that $S=\sum_{n=1}^{\infty} (q_n)^{b(N,L)}=\infty$.
Hence, from Lemma~\ref{L: S infty}, $S_L=\sum_{n=1}^{\infty} (q_{nL})^{b(N,L)}=\infty$.
The result follows from Proposition~\ref{P: dies}.
\end{proof}

\begin{proof}[Proof of Theorem~\ref{T: OnC} (b)]
It follows from Propositions~\ref{P: Survival} and~\ref{P: Prob leq}.
Indeed, if $\qn \in D^c$ and $m(\qn) \leq b(N,L) $, then
\begin{align*}
\sum_{n=0}^{\infty} \an &\leq 2^{N L^2} \sum_{n=0}^{\infty} 
(q_{n+1})^{N f(1)}\times \dots \times (q_{nL+L})^{N f(L)} \\
 &\leq 2^{N L^2} \sum_{n=0}^{\infty} \sum_{j=1}^L (q_{n+j})^{b(N,L)} < \infty,
\end{align*}
therefore the process survives with positive probability.
\end{proof}

\begin{proof}[Proof of Theorem~\ref{T: OnD} (a)]
%If $(q_n)_{n=1}^\infty \in D \cap D_1$, then, from Lemma~\ref{L: dies}, we have that the process dies out a.s., since in this case $m=\infty$, so $a<m$.
%Thus, the part (a) of Theorem~\ref{T: OnD} follows immediately from Lemma~\ref{L: Subs}.
Observe that the reasoning presented in the proof of part \textit{(a)} of Theorem~\ref{T: OnC} 
can be used here, even if $m=\infty$.
\end{proof}

\begin{proof}[Proof of Theorem~\ref{T: OnD} (b)]
Let $\bar{q}_n= \min \{q_{nL+1},\dots,q_{nL+L} \}$. 
By the hypotheses, $\sum_{n=0}^{\infty} (\bar{q}_n)^{b(N,L)} <\infty$ can not hold.
Hence, $\sum_{n=0}^{\infty} (\bar{q}_n)^{b(N,L)}=\infty$ and from Proposition~\ref{P: dies} the process dies out a.s for all $L, N$.
\end{proof}

\begin{proof}[Proof of Theorem~\ref{T: OnD} (c)]
Let $L \geq A :=\max_k \{ n_{k+1}-n_k \}$ and $N \geq m(q_{n_k})$. 
By hypothesis $A$ and $N$ are finite. If for each $k$ there exists at least one particle at $n_k$ which
gives $L$ successive jumps to its right then $E_n$ holds for all $n$. Now 
$\sum \an < \infty$ as 
\begin{align*}
\an &< 2^{N L^2} (q_{n+1})^{N f(1)}\times \dots \times (q_{nL+L})^{N f(L)} \\ 
&< 2^{N L^2} (q_{n_k})^N \times (q_{n_{k + 1}})^N \times \cdots \\
&< L 2^{N L^2} (q_{n_k})^N
\end{align*}
(if $a,b \in [0,1]$ then $a \times b < a + b $) and the process survives by Proposition~\ref{P: Survival}.
\end{proof}

\section{Final Remarks}

The following results help to figure out what happens when $\qn \notin D_1$ besides
making clear the influence of the parameters $N$ and $L$.

\begin{obs}
\label{R: CriteriaSerie-an}
Observe that we are now able to present the following criteria 
\begin{equation}
\label{E: CriteriaSerie-an}
\sum_{n=0}^{\infty} \an = \infty \iff \process \ \dies
\end{equation}

The result follows from Proposition~\ref{P: Survival} and Proposition~\ref{P: Extinction} 
with $r_n=nL+1+k$ for $\{k=0,1,...,L-1\}$.
\end{obs}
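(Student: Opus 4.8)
The plan is to establish the biconditional~\eqref{E: CriteriaSerie-an} by proving its two halves separately, relying on the elementary fact that the alternatives $\process \ \surv$ and $\process \ \dies$ are mutually exclusive and exhaustive, since the survival probability is a single real number that is either strictly positive or zero. Thus it suffices to match up the two convergence regimes of $\sum_{n=0}^\infty \an$ with these two alternatives.

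One half is immediate: the implication $\sum_{n=0}^\infty \an < \infty \Rightarrow \process \ \surv$ is exactly Proposition~\ref{P: Survival}. Reading this contrapositively, $\process \ \dies$ forces $\sum_{n=0}^\infty \an = \infty$, which is the ``$\Leftarrow$'' direction of~\eqref{E: CriteriaSerie-an}.

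For the ``$\Rightarrow$'' direction I would start from $\sum_{n=0}^\infty \an = \infty$ and split the series into arithmetic progressions of step $L$. Writing each index uniquely as $n = mL+1+k$ with $m \geq 0$ and $k \in \{0,1,\dots,L-1\}$, every positive integer appears exactly once, so
\[ \sum_{n=0}^\infty \an = a_0 + \sum_{k=0}^{L-1} \sum_{m=0}^\infty a_{mL+1+k}. \]
Because $a_0$ is a single finite term, divergence of the left-hand side forces at least one of the $L$ inner sums to diverge; I fix a value of $k$ for which this occurs. Putting $r_m := mL+1+k$, the consecutive gaps all equal $L$, so in particular $r_{m+1}-r_m \geq L$, which is exactly the admissibility condition required in Proposition~\ref{P: Extinction}; and $\sum_m a_{r_m} = \infty$ by the choice of $k$. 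Proposition~\ref{P: Extinction} then yields $\process \ \dies$, completing the ``$\Rightarrow$'' direction.

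The only nonroutine step is this reduction of a divergent series over all of $\bbN$ to a divergent series supported on an arithmetic progression whose step is precisely $L$: the pigeonhole principle supplies such a progression, and its step $L$ matches the spacing hypothesis of Proposition~\ref{P: Extinction} exactly. Everything else amounts to bookkeeping together with the exhaustiveness of the survival/extinction dichotomy, so I expect no genuine difficulty beyond the indexing.
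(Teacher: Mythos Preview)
Your argument is correct and follows exactly the approach sketched in the paper: one direction is Proposition~\ref{P: Survival}, and for the other you split $\sum_n a_n$ into the $L$ arithmetic progressions $r_m=mL+1+k$, pick by pigeonhole a $k$ whose subseries diverges, and feed that into Proposition~\ref{P: Extinction}. The only thing you have added beyond the paper's one-line justification is the explicit bookkeeping (isolating $a_0$ and verifying the gap condition $r_{m+1}-r_m=L$), which is precisely what was implicit in the paper's choice of $r_n$.
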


Given a subsequence $(q_{n_k})_{k=1}^\infty$ of a sequence $(q_n)_{n=1}^\infty$, let
\[ \cM = \{n_{k+1}-n_k: k \geq 1 \}. \]
If $\cM$ is bounded, then we define
\[ l((q_{n_k})_{k=1}^\infty) :=  \max \{ m \in \cM: m \text{ appears infinitely often in } \cM \}, \]
otherwise let
\[ l((q_{n_k})_{k=1}^\infty) := \infty. \]

For any sequence $(q_n)_{n=1}^\infty$, we define
\begin{align*}
L_0 &:= L_0((q_n)_{n=1}^\infty)= \min \biggl\{l((q_{n_k})_{k=1}^\infty): (q_{n_k})_{k=1}^\infty \in D^c \biggr\}, \\[0.1cm]
L_1 &:= L_1((q_n)_{n=1}^\infty)= \min \biggl\{l((q_{n_k})_{k=1}^\infty) \times m((q_{n_k})_{k=1}^\infty): (q_{n_k})_{k=1}^\infty \in D^c \biggr\}.
\end{align*}
Observe that $L_0>1$ if and only if $(q_n)_{n=1}^\infty \in D$.
Moreover, $L_0=\infty$ if and only if $\{n_{k+1}-n_k\}_{k \geq 1}$ is unbounded for every subsequence $(q_{n_k})_{k=1}^\infty \in D^c$.

Observe that $l((q_n)_{n=1}^\infty)=1$ for any sequence. Besides that, for any sequence
$(q_n)_{n=1}^\infty \in D_1$, it holds that $L_1((q_n)_{n=1}^\infty)=m((q_n)_{n=1}^\infty)$. 

To see the later, consider first the case of a subsequence such that $ l((q_{n_k})_{k=1}^\infty) = \infty $ and after the case of a subsequence where $\cM$ is bounded by a finite $M$. For that sequence $m(\qnk) \geq m(\qn)$ because 
\[ \sum_{n=1}^{\infty} (q_n)^m \leq M \sum_{k=1}^{\infty} (q_{n_k})^m < \infty \]
where $m = m(\qnk).$ 

\begin{teo}
\label{T: Final}
For any sequence $(q_n)_{n=1}^\infty$, we have
\begin{itemize}
\item[\textit{(a)}] If $L < L_0$, then $\process$ \dies
\item[\textit{(b)}] If $L \in \{L_0, L_0+1,...,L_1-1\}$, then $\process$ \surv \ for all large $N$.
\item[\textit{(c)}] If $L \geq L_1$, then $\process \ \surv$
\end{itemize}
\end{teo}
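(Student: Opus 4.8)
The plan is to reduce all three statements to the convergence of a single series and then analyze that series through subsequences with controlled gaps. Write $t_n := \prod_{j=1}^{L}(q_{n+j})^{Nf(j)}$. Combining Propositions~\ref{P: Prob geq} and~\ref{P: Prob leq} gives $t_n \le a_n \le 2^{NL^2}\,t_n$, so $\sum_n a_n$ and $\sum_n t_n$ converge or diverge together. By Proposition~\ref{P: Survival} together with Remark~\ref{R: CriteriaSerie-an}, the process survives w.p.p.\ if and only if $\sum_n t_n<\infty$. Hence it suffices to decide, for given $N$ and $L$, whether $\sum_n t_n$ converges, and to match the answer with the position of $L$ relative to $L_0$ and $L_1$.

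For the survival statements (b) and (c) I would argue in the forward direction. If $L\ge L_0$, fix a subsequence $(q_{n_k})\in D^c$ with $l:=l((q_{n_k}))\le L$ and $m:=m((q_{n_k}))<\infty$; since the gaps are eventually $\le l$, every window $\{n+1,\dots,n+L\}$ with $n$ large contains at least $p:=\lfloor L/l\rfloor\ge1$ of the $n_k$. Replacing each such factor $(q_{n_k})^{Nf(j)}$ by $(q_{n_k})^{N}$ (legitimate as $f(j)\ge1$ and $q_{n_k}\le1$), discarding the remaining factors ($\le1$), and then applying $\prod_{i=1}^{p}x_i\le\tfrac1p\sum_{i=1}^{p}x_i^{p}$ with $x_i=(q_{n_{k(n)+i}})^{N}$, I would bound $t_n$ by a sum of terms $(q_{n_k})^{Np}$. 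Summing over $n$ and noting each $n_k$ is charged at most $pL$ times gives $\sum_n t_n\le L\sum_k (q_{n_k})^{Np}$, which is finite exactly when $Np\ge m$, i.e.\ for all large $N$; this is part (b). For (c) the stronger hypothesis $L\ge L_1$ produces a subsequence with $l\cdot m\le L$, so $p\ge m$; running the same estimate at $N=1$ with $m$ points (applying the inequality to the power $m$) yields $\sum_n t_n\le mL\sum_k(q_{n_k})^{m}<\infty$, and since $\sum_n t_n$ is non-increasing in $N$ this covers every $N\ge1$.

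Part (a) is the delicate one, and I would prove its contrapositive: if $\sum_n t_n<\infty$ for some $N$, then $L\ge L_0$. The \textbf{main obstacle} is to extract, from the convergence of a series of \emph{window products}, a subsequence whose consecutive gaps are at most $L$ — not merely $2L-1$, which is all that a partition into disjoint blocks of length $L$ would yield (block minima can sit at opposite ends of adjacent blocks). I would overcome this with a greedy construction on \emph{overlapping} windows. Writing $F:=f(L)=\lfloor (L+1)/2\rfloor$ for the largest exponent, note $t_n\ge\bigl(\prod_{i=n+1}^{n+L} q_i\bigr)^{NF}$. Starting from a large index $i_0$, define recursively $i_{t+1}$ to be a minimizer of $q_i$ over $i_t<i\le i_t+L$, so that $1\le i_{t+1}-i_t\le L$ by construction. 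Since $q_{i_{t+1}}$ is the minimum over $\{i_t+1,\dots,i_t+L\}$, we have $q_{i_{t+1}}^{L}\le\prod_{i=i_t+1}^{i_t+L} q_i$, whence $q_{i_{t+1}}^{LNF}\le t_{i_t}$. As the $i_t$ are distinct, $\sum_t q_{i_{t+1}}^{LNF}\le\sum_n t_n<\infty$, so $(q_{i_t})\in D^c$ with $m((q_{i_t}))\le LNF$ and eventual gap at most $L$; therefore $L_0\le l((q_{i_t}))\le L$. Contrapositively, $L<L_0$ forces $\sum_n t_n=\infty$ for every $N$, hence $\sum_n a_n=\infty$, and extinction a.s.\ follows from Remark~\ref{R: CriteriaSerie-an} (equivalently Proposition~\ref{P: Extinction}).

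Finally I would record the bookkeeping that makes the three regimes exhaustive and consistent with the boundary conventions ($l=1$ for the full sequence, $L_0>1\iff\tilde q\in D$, and $L_0=\infty$ precisely in the setting of Theorem~\ref{T: OnD}(b)), checking in particular that the finitely many short windows near the start never affect convergence and that the ``large $N$'' threshold in (b) is the explicit $N\ge m/\lfloor L/l\rfloor$.
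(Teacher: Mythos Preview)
Your argument is correct. For parts (b) and (c) you follow the paper's route almost verbatim: bound $a_n$ above via Proposition~\ref{P: Prob leq}, use that each window of length $L$ contains enough points of a subsequence in $D^c$, and sum. Your use of the AM--GM inequality $\prod_{i=1}^{p}x_i\le\tfrac1p\sum x_i^{p}$ makes the bookkeeping cleaner than the paper's somewhat informal product-to-sum step, but the idea is identical.

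Part (a) is where you genuinely diverge. The paper argues \emph{directly}: it fixes a subsequence realising $L_0$, isolates infinitely many gaps of length exactly $L_0$, and shows that the window minima inside those gaps must form a series diverging at every power---otherwise one could adjoin those minimising indices to the original subsequence and manufacture a new subsequence in $D^c$ with $l<L_0$, contradicting the minimality of $L_0$. Proposition~\ref{P: Extinction} then finishes. Your approach is the \emph{contrapositive}: assuming $\sum_n t_n<\infty$, you greedily pick successive window minima to build a subsequence with all gaps $\le L$ and $\sum q_{i_t}^{LNF}<\infty$, so $L_0\le L$ witnesses itself. This sidesteps the paper's somewhat delicate union-of-subsequences contradiction and the need to verify that the new $l$ really drops below $L_0$; it also makes transparent why overlapping windows (rather than a disjoint partition) are needed to keep gaps at most $L$. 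The paper's argument, on the other hand, stays closer to the extinction machinery and yields the conclusion via Proposition~\ref{P: Extinction} without invoking the full equivalence of Remark~\ref{R: CriteriaSerie-an}. Both are valid; yours is the more self-contained of the two.

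Two minor remarks. In (b), ``finite exactly when $Np\ge m$'' overstates what you prove and need: your upper bound is finite \emph{when} $Np\ge m$, which already gives survival for large $N$. And your charge count ``at most $pL$ times'' is loose (each $n_k$ lies in at most $L$ windows), but the resulting constant is harmless.
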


\begin{proof}[Proof of Theorem~\ref{T: Final} (a)]
%\noindent
%\textit{Proof of (a)}
Suppose  that a subsequence $(q_{n_k})_{k=1}^\infty \in D^c$ of a sequence $(q_n)_{n=1}^\infty$
is such that $L_0=l((q_{n_k})_{k=1}^\infty)$. From this subsequence we pick another infinite increasing subsequence $(r_n)_{n=1}^\infty$, such that for each set of integers $\{ r_{2n-1}, r_{2n} \}$ it is true that $ r_{2n} - r_{2n-1} = L_0$.

Then there are a subsequence $(r_n)_{n=1}^\infty$ of $(n_k)_{k=1}^\infty$ such that 
\[ \{r_{2n-1}+1,r_{2n-1}+2
,...,r_{2n}-1\} \cap (n_k)_{k=1}^\infty = \emptyset. \]

For $L < L_0$ let $\bar{q}(2n-1)= \min \{q_{r_{2n-1}+1},q_{r_{2n-1}+2},\dots,q_{r_{2n-1}+L} \}$
and $n(r_{2n-1})$ the first position where $q_{n(r_{2n-1})}=\bar{q}(2n-1).$

By the definition of $L_0$, $\sum_{n=0}^{\infty} (\bar{q}(2n-1))^z =\infty$ $\forall z \in \bbN$. Otherwise with the subsequences 
$({n_k})_{k=1}^\infty$ and $(n(r_{2n-1}))_{n=0}^\infty$ we could create a new subsequence $(b_j)_{j=1}^\infty := ({n_k})_{k=1}^\infty  \cup (n(r_{2n-1}))_{n=0}^\infty $ such that 
$(q_{b_j})_{j=1}^\infty \in D^c$ and $l((q_{b_j})_{j=1}^\infty)<L_0$, which is not possible because $L_0$ is the minimum.  

Hence, $ \sum_{n=0}^{\infty} \arnImpar \geq \sum_{n=0}^{\infty} (\bar{q}(2n-1))^{b(N,L)} = \infty, $
and the result follows from Proposition~\ref{P: Extinction}.
\end{proof}

\begin{proof}[Proof of Theorem~\ref{T: Final} (b)]
%\smallskip
%\noindent
%\textit{Proof of (b)}
Let $(q_{n_k})_{k=1}^\infty$ as defined above and
$m_0:=m((q_{n_k})_{k=1}^\infty)$. Observe that each block of size $L$ has at least one element of $(q_{n_k})_{k=1}^\infty$, therefore from

\begin{align*}
\sum_{n=0}^{\infty} a_n(m_0,L,(q_n)_{n=1}^\infty) &\leq 2^{L^2 N}\sum_{n=0}^{\infty} (q_{n+1})^{f(1)m_0}...(q_{n+L})^{f(L)m_0} \\
&\leq 2^{L^2 N}\sum_{k=1}^{\infty} (q_{n_k})^{m_0}<\infty
\end{align*}
it follows that there exists a minimal $N_0=N_0(L)$ such that
\[ \sum_{n=0}^{\infty} a_n(N_0,L,(q_n)_{n=1}^\infty)<\infty .\]

Hence the process survives with positive probability if and only if $N \geq N_0$ by~(\ref{E: CriteriaSerie-an}).
\end{proof}

\begin{proof}[Proof of Theorem~\ref{T: Final} (c)]
%\smallskip
%\noindent
%\textit{Proof of (c)}
Observe that
\begin{align*}
\sum_{n=0}^{\infty} a_n(1,L_1,(q_n)_{n=1}^\infty) &\leq 2^{L_1^2 N}\sum_{n=0}^{\infty} (q_{n+1})^{f(1)}...(q_{n+L_1})^{f(L_1)}  \\
& \leq 2^{L^2 N} m \sum_{k=1}^{\infty} (q_{n_k})^{m}<\infty.
\end{align*}
The second inequality holds because in each block of size $L_1=l \times m$ there exists 
at least $m$ elements of $(q_{n_k})_{k=1}^\infty$. Therefore from~(\ref{E: CriteriaSerie-an}), the process survives with 
positive probability.
\end{proof}

\begin{exa}
Consider
\begin{equation*}
q_n:=\begin{cases}
\frac{1}{k} &\text{if } n = 2 k, \\[0.2cm] 
\frac{1}{\log (k+2)} &\text{if } n=2k+1.
\end{cases}
\end{equation*}
In this case $L_0=2$ and $L_1=4.$  From~(\ref{E: CriteriaSerie-an}), ${\Gamma[N,2,(q_n)_{n=1}^{\infty}]} \ \dies$ only for $N =1.$ From~(\ref{E: CriteriaSerie-an}), ${\Gamma[N,3,(q_n)_{n=1}^{\infty}]} \ \surv$ for all $N$.
\end{exa}

\section{Final Examples}
For the next two examples consider $\alpha > 0$ and the following $\qn \in D_1^c \cap D,$

\[q_{3n}=\frac{1}{n^{\alpha}}, \ q_{3n+1} = q_{3n+2} = \frac{1}{\log n}.\]

\begin{obs}
As $L_0=3$, from part (a) of Theorem~\ref{T: Final}, $\process$ $\dies$ for $L<3$.
\end{obs}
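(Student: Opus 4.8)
The statement is really the single arithmetic claim $L_0=3$ for the given sequence; once this is in hand, part~(a) of Theorem~\ref{T: Final} immediately yields $\process\ \dies$ for every $L<3$, i.e.\ for $L\in\{1,2\}$. So the plan is to pin down the value of $L_0$ and then quote the theorem, and essentially all of the work sits in the computation of $L_0=\min\{l((q_{n_k})_{k=1}^\infty):(q_{n_k})_{k=1}^\infty\in D^c\}$.

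First I would record the two scales present in $\qn$. The ``slow'' coordinates $q_{3n+1}=q_{3n+2}=1/\log n$ satisfy $\sum_n(1/\log n)^M=\infty$ for every $M$ (since $(\log n)^M<n$ eventually), which re-confirms $\qn\in D$ and gives $L_0>1$. The ``fast'' coordinates $q_{3n}=1/n^\alpha$ satisfy $\sum_n(1/n^\alpha)^M<\infty$ as soon as $\alpha M>1$. For the upper bound $L_0\le 3$ I would then exhibit an explicit competitor: the subsequence formed by the fast coordinates, living at positions $3,6,9,\dots$. Its consecutive gaps are all equal to $3$, so $\cM=\{3\}$ is bounded with $l=3$, and choosing $M>1/\alpha$ shows it lies in $D^c$; hence $L_0\le 3$.

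The main work — and the step I expect to be the real obstacle — is the lower bound $L_0\ge 3$, i.e.\ ruling out any $D^c$ subsequence with $l\in\{1,2\}$. I would argue by contradiction. A value $l\le 2$ forces $\cM$ to be bounded (otherwise $l=\infty$) and, beyond some index, all consecutive gaps to be $\le 2$. A set of integers whose consecutive gaps are $\le 2$ contains at least half of the integers in any long interval, whereas the fast positions $\{3n\}$ have density only $1/3$; hence such a subsequence must meet the slow positions $\{3n+1,3n+2\}$ along a set of indices $J$ of positive lower density. Since $j\mapsto(1/\log j)^M$ is decreasing and $\sum_j(1/\log j)^M=\infty$, a positive-density subsum still diverges for every $M$ (by comparison, $\sum_{j\in J,\,j\le n}(1/\log j)^M\ge |J\cap[1,n]|\,(1/\log n)^M\ge \delta\, n/(\log n)^M\to\infty$). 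Thus the subsequence would lie in $D$, contradicting membership in $D^c$.

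Combining the two bounds gives $L_0=3$, and then part~(a) of Theorem~\ref{T: Final} delivers $\process\ \dies$ for $L<3$. The only delicate point is the density estimate in the third paragraph: one must check that imposing gaps $\le 2$ genuinely forces a non-negligible proportion of slow coordinates, and that summing $(1/\log j)^M$ over a positive-density index set diverges at every power $M$ — everything else is bookkeeping with the definitions of $m$, $l$, and $L_0$.
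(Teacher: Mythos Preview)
Your argument is correct. The paper states this remark without proof: it simply asserts $L_0=3$ and invokes Theorem~\ref{T: Final}(a), leaving the verification of $L_0=3$ to the reader. Your proposal therefore supplies exactly the justification the paper omits.

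The upper bound $L_0\le 3$ via the subsequence $(q_{3k})_k$ is immediate and matches what the paper clearly has in mind. Your lower bound is the substantive addition: the density argument (gaps $\le 2$ force density $\ge 1/2$, the fast positions have density $1/3$, so a positive-density set of slow positions must appear, and $\sum_{j\in J,\,j\le n}(1/\log j)^M\ge \delta n/(\log n)^M\to\infty$) is sound. One cosmetic point: for a slow index $j=3n+1$ or $3n+2$ one has $q_j=1/\log n$ with $n\sim j/3$, so the bound should literally read $(1/\log(j/3))^M$ rather than $(1/\log j)^M$; since $\log(j/3)\sim\log j$, this changes nothing. Otherwise the proof is complete and more detailed than what the paper offers.
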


\begin{exa}
If $\alpha \geq 1$, then from~(\ref{E: CriteriaSerie-an}), the $\process \ \surv$ for $L \geq 3$.
\end{exa}

\begin{figure}[ht]
\center
\includegraphics[width=8cm]{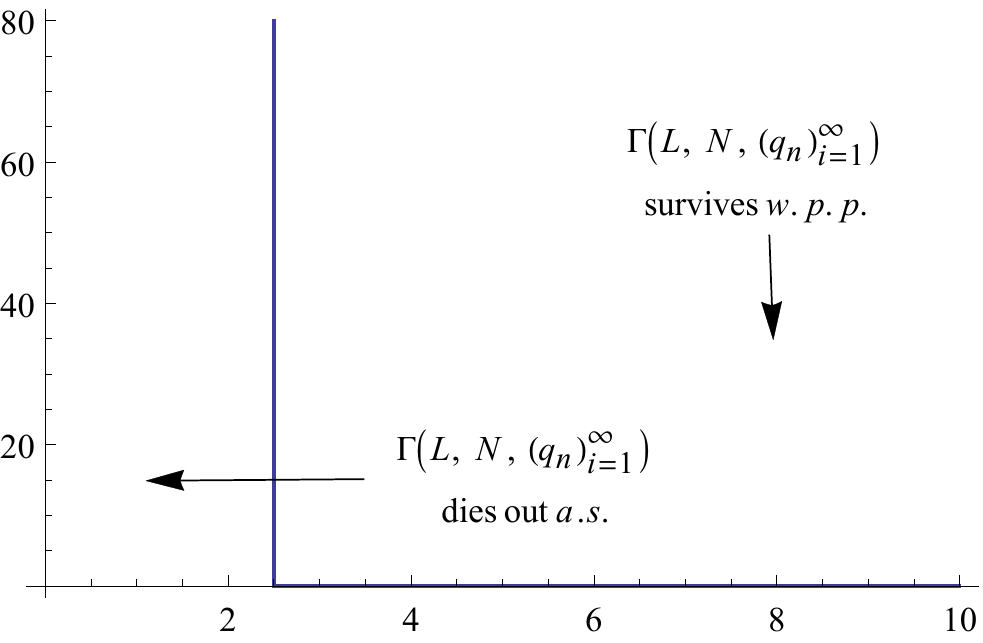}
\label{G: T6.1}
\caption{$\alpha \geq 1$}
\end{figure}
 
\begin{exa}
If $\alpha < 1$ then from~(\ref{E: CriteriaSerie-an}), for $L \geq 3$, 

\[ \process \ \dies \iff \alpha N \sum_{i=0}^{\lfloor \frac{L}{3} \rfloor -1} \lfloor \frac{3i+2}{2} \rfloor < 1.\]
\end{exa}

\begin{figure}[ht]
\center
\includegraphics[width=8cm]{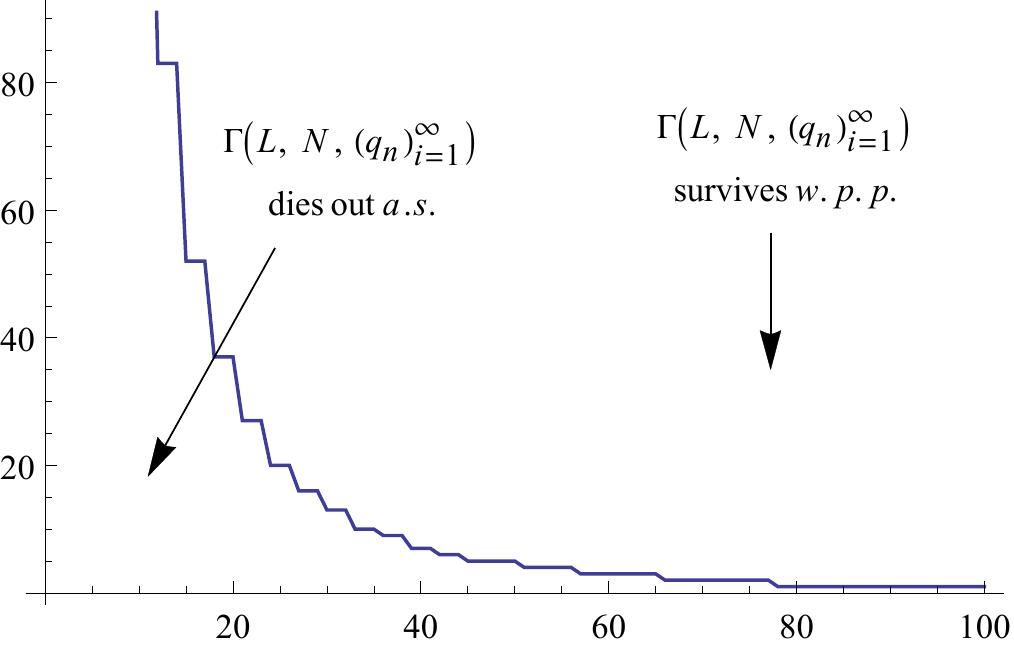}
\label{G: T6.2}
\caption{$\alpha = 10^{-3}$}
\end{figure}

\begin{exa}
From Remark~\ref{R: CriteriaNonIncreasing}, under condition that $\qn \in D_1$,
one can see the whole picture related to extinction-survival matters. In the following
picture $C=900$.

\begin{figure}[ht]
\center
\includegraphics[width=10cm]{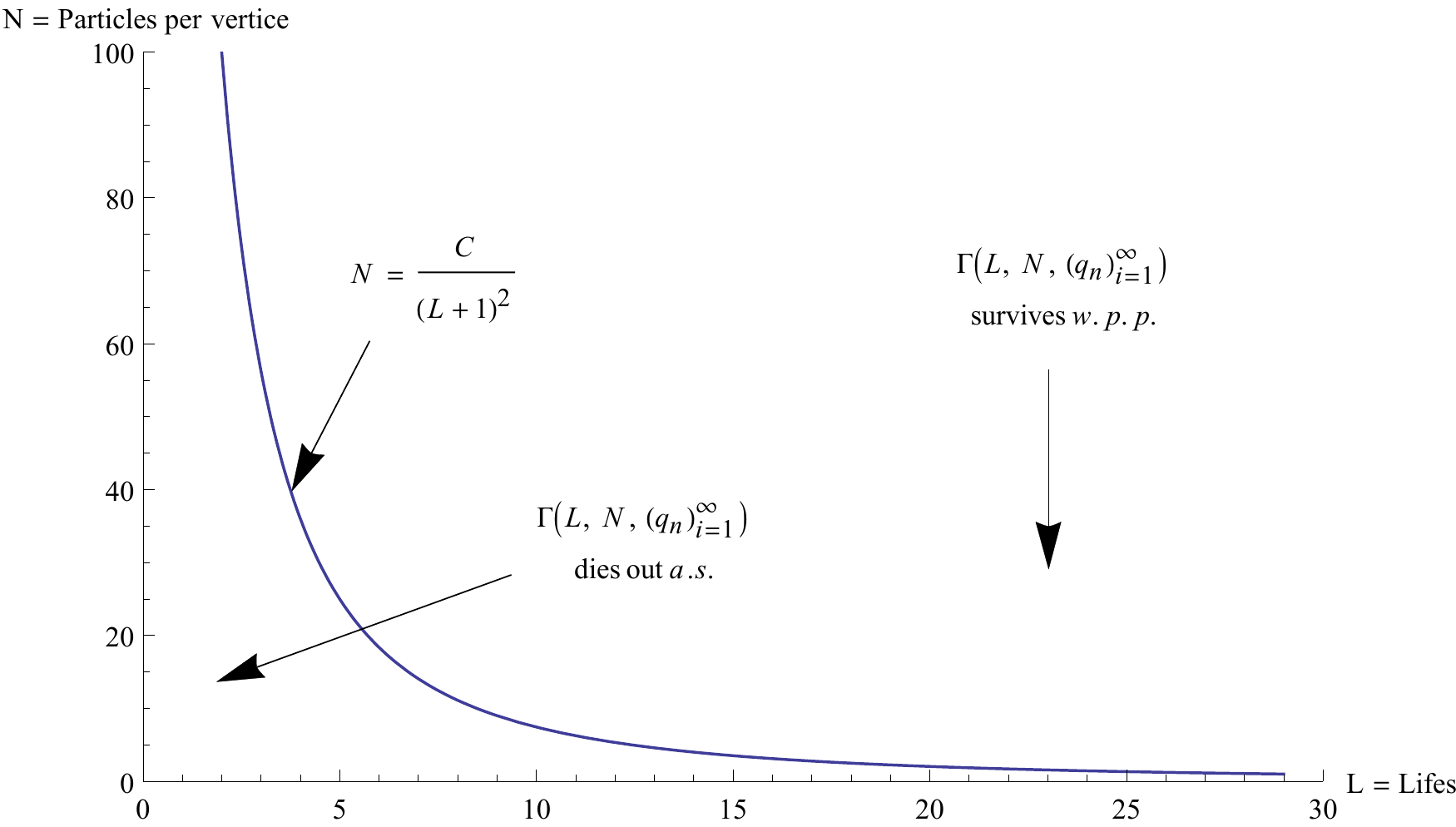}
\label{omeubrowser}
\caption{General criteria for $\qn \in D_1$}
\end{figure}

\end{exa}

\begin{exa}
Consider $0 < \alpha < 1< \beta$ and and the following $\qn \in D_1^c \cap D^c,$

\[ \ q_{3n}=q_{3n+1}= \frac{1}{n^{\alpha}}, \ q_{3n+2}= \frac{1}{n^{\beta}}.\]

Therefore from part (b) of Theorem~\ref{T: Final}, we have that $\process$ $\surv$ for
$L=3$. From~(\ref{E: Prob geq}) and~(\ref{E: CriteriaSerie-an}), we conclude that the 
following assertions hold:

For $L=2$, 
\[ \process \ \dies \iff  2 N \alpha \leq 1.\]

For $L=1$,
\[ \process \  \dies \iff    N \alpha \leq 1.\]
\end{exa}

\noindent
{\bf Acknowledgments:} 

The authors would like to thank the two anonymous reviewers for their valuable
suggestions and comments.

\end{document}